\documentclass [11pt]{amsart}
\usepackage{amsmath}
\usepackage{amssymb}
\usepackage{comment}

\topmargin .85in 
\voffset = -40pt 
\hoffset = -44pt 
\textwidth 6.2in 
\textheight 8.5in

\newtheorem{thm}{Theorem}[section]

\newtheorem{lem}[thm]{Lemma}

\theoremstyle{definition}

\theoremstyle{remark}
\newtheorem{rem}[thm]{Remark}
\numberwithin{equation}{section}

\newcommand{\beas}{\begin{eqnarray*}}
\newcommand{\eeas}{\end{eqnarray*}}
\newcommand{\bes} {\begin{equation*}}
\newcommand{\ees} {\end{equation*}}
\newcommand{\be} {\begin{equation}}
\newcommand{\ee} {\end{equation}}
\newcommand{\bea} {\begin{eqnarray}}
\newcommand{\eea} {\end{eqnarray}}
\newcommand{\ra} {\rightarrow}
\newcommand{\txt} {\textmd}
\newcommand{\ds} {\displaystyle}
\newcommand{\R} {\mathbb{R}}
\newcommand{\C} {\mathbb{C}}
\begin{document}

\title[An Uncertainty Principle of Paley and Wiener on Euclidean Motion Group] {An Uncertainty Principle of Paley and Wiener on Euclidean Motion Group}

\author{Mithun Bhowmik and Suparna Sen}

\address{Stat-Math Unit, Indian Statistical Institute, 203 B. T. Road, Kolkata - 700108, India.}

\email{mithunbhowmik123@gmail.com, suparna29@gmail.com}

\thanks{The second author was supported by INSPIRE Faculty Award from Department of Science and Technology, India.}


\begin{abstract}
A classical result due to Paley and Wiener characterizes the existence of a non-zero function in $L^2(\R)$, supported on a half line, in terms of the decay of its Fourier transform. In this paper we prove an analogue of this result for compactly supported continuous functions on the Euclidean motion group $M(n)$. We also relate this result to a uniqueness property of solutions to the initial value problem for time-dependent Schr\"odinger equation on $M(n)$.  
\vspace{0.1in}
\begin{flushleft}
MSC 2010 : Primary 22E30; Secondary 43A80. \\
Keywords : Uncertainty Principle, Euclidean Motion Group, Schr\"odinger Equation. \\
\end{flushleft}
\end{abstract}

\maketitle

\section{Introduction}

An uncertainty principle in harmonic analysis says that a non-zero function and its Fourier transform both cannot be small simultaneously. Depending on various notions of smallness one gets different versions of uncertainty principle on Euclidean spaces and various non-commutative groups (see \cite{FS, T, S}). In this paper we will be concerned with an uncertainty principle involving the support of a function and the pointwise decay of its Fourier transform. It is well known that if $f$ is compactly supported in $\R$ and its Fourier transform satisfies the estimate $|\hat f(y)| \leq Ce^{-a|y|}$ for some $a>0$, then $f$ extends as a holomorphic function to a strip in the complex plane containing the real line and hence $f$ is identically zero. However, the situation is not so simple if we consider a locally integrable function $\theta(y)$ instead of $a|y|$ and consider the following estimate on the Fourier transform
\bes |\hat f(y)| \leq Ce^{-\theta(|y|)}, \:\:\:\: \txt{ for } y \in \R.\ees A result due to Paley and Wiener (Theorem II, \cite{PW1}; Theorem XII, P. 16, \cite{PW}) gives a characterization of the existence of a non-zero function whose Fourier transform satisfies such an estimate in terms of an integrability condition on $\theta$. Our main concern in this paper is the result of Paley and Wiener.
\begin{thm} \label{paleywiener}
Let $\theta$ be a non-negative locally integrable function on $[0,\infty)$. There exists a non-zero $f \in L^2(\R)$ vanishing for $x \geq x_0$ for some $x_0 \in \R$ such that \bes|\hat{f}(y)| \leq C e^{-\theta(|y|)}, \:\:\:\: \txt{ for almost every } y \in \R, \ees if and only if \bes \int_0^{\infty}\frac{\theta (t)}{1+t^2}dt<\infty.\ees
\end{thm}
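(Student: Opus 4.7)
The plan is to prove both directions by exploiting the fact that $L^2$ functions on $\R$ supported in a half-line have Fourier transforms extending holomorphically to a half-plane, and then invoking the classical logarithmic integrability property of non-zero $H^2$ functions on the upper half-plane.

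After translating $f$, I may assume $x_0 = 0$, so that $f$ is supported in $(-\infty, 0]$. For the necessity direction, the formula
\bes
F(z) = \int_{-\infty}^{0} f(t)\, e^{-itz}\, dt, \qquad z = \xi + i\eta,
\ees
defines a holomorphic function on the open upper half-plane: for $\eta \geq 0$ and $t \leq 0$ the factor $e^{\eta t}$ is bounded by $1$, so Plancherel applied to $t \mapsto f(t)e^{\eta t}$ yields $\int_\R |F(\xi + i\eta)|^2\, d\xi \leq 2\pi\|f\|_2^2$ uniformly in $\eta > 0$. Hence $F$ lies in $H^2$ of the upper half-plane, and its non-tangential boundary values coincide almost everywhere with $\hat f$. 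Because $f$ is non-zero, so is $F$, and I would quote the classical inequality
\bes
\int_\R \frac{\log|F(\xi)|}{1 + \xi^2}\, d\xi > -\infty
\ees
valid for every non-zero function in $H^2$ of the upper half-plane. Combined with the hypothesis $|\hat f(\xi)| \leq Ce^{-\theta(|\xi|)}$, this immediately gives $\int_0^\infty \theta(t)/(1+t^2)\, dt < \infty$.

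For the sufficient direction, assume $\int_0^\infty \theta(t)/(1+t^2)\, dt < \infty$, so that $-\theta(|\cdot|)$ lies in $L^1(\R, \, dt/(1+t^2))$. The standard outer function construction then produces a bounded holomorphic function $G$ on the upper half-plane whose boundary modulus equals $e^{-\theta(|\xi|)}$ almost everywhere. To bring $G$ into $H^2$, set $F(z) = G(z)/(z+i)^2$; then $|F(\xi)| \leq e^{-\theta(|\xi|)}/(1+\xi^2)$ on $\R$ and $F \in H^2(\C_+)$. The Paley--Wiener theorem for $H^2$ of the upper half-plane now provides a non-zero $f \in L^2(\R)$ supported in $(-\infty, 0]$ with $\hat f = F$, and this $f$ automatically satisfies the required pointwise estimate $|\hat f(\xi)| \leq e^{-\theta(|\xi|)}$.

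The main technical obstacle is the outer function construction: one must verify that a bounded holomorphic function on the upper half-plane with prescribed boundary modulus $\varphi$ exists precisely when $\log\varphi$ is integrable against the Poisson measure $dt/(1 + t^2)$, and that the resulting function has the claimed non-tangential boundary behaviour. This is a cornerstone of Hardy space theory and is treated in detail in standard references (e.g.\ Koosis or Duren); I would appeal to such a reference rather than reproduce the argument here.
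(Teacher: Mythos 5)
The paper does not prove Theorem \ref{paleywiener} at all; it is quoted as a known result with citations to Paley--Wiener \cite{PW1,PW} (and \cite{K}). Your Hardy-space argument --- half-line support gives an $H^2(\mathbb{C}_+)$ extension of $\hat f$, log-integrability of boundary values of non-zero $H^2$ functions gives necessity, and the outer-function construction divided by $(z+i)^2$ gives sufficiency --- is exactly the classical proof found in those references, and it is correct as sketched.
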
 
\noindent Similar results have also been obtained in \cite{I, L, Hi, L1, K}.

The main purpose of this paper is to prove an analogue of Theorem \ref{paleywiener} on the Euclidean motion group $M(n)$ (see Theorem \ref{mn}). Since there is no obvious analogue of the half line on $M(n)$ we choose to concentrate on a weaker version of Theorem \ref{paleywiener}. Precisely, we will concentrate only on  compactly supported continuous functions defined on $M(n)$. The proof of Theorem \ref{mn} needs a several variable analogue of Theorem \ref{paleywiener}, for compactly supported continuous functions, which we prove in Theorem \ref{ccradon}. The main ingredient for the proof of Theorem \ref{mn} is a complex analytic lemma (see Lemma \ref{lemma}). It seems to us that this lemma can be applied to prove analogous results for other non-commutative groups and homogeneous spaces. In fact, we will use this lemma to prove an analogue of Theorem \ref{paleywiener} for connected, non-compact, semisimple Lie groups with finite center, in a forthcoming paper.

It has recently been observed in \cite{EKPV, KPV} that it is possible to relate some of the uncertainty principles to the problem of uniqueness of solutions to the Schr\"odinger Equation. Our last result (Theorem \ref{uniqG}) in this paper attempts to relate the uncertainty principle of Paley and Wiener to the uniqueness of solutions to the initial value problem for the time-dependent Schr\"odinger Equation on $M(n)$. 

The paper is organized as follows: In the next section we prove an analogue of Theorem \ref{paleywiener} for compactly supported continuous functions on $\R^n$ and a complex analytic lemma which is crucial in proving the main result. In section 3, we first describe the required preliminaries on the Euclidean motion group $M(n)$ and then prove the main theorem of this paper. In the last section we prove the uniqueness of solutions to the initial value problem for time-dependent Schr\"odinger equation on $\R^n$ and subsequently on $M(n)$. 

We will use the following notations and conventions in the paper: $\Im z$ denotes the imaginary part of $z$, $C_c(X)$ denotes the set of compactly supported continuous functions on $X$, $C_c^{\infty}(X)$ denotes the set of compactly supported smooth functions on $X$, supp$(f)$ denotes the support of the function $f$ and $C$ denotes a constant whose value may vary. For $x,y \in \R^n$, we will use $\|x\|$ to denote the norm of the vector $x$ and $x \cdot y$ to denote the Euclidean inner product of the vectors $x$ and $y$.

\section{Some Euclidean Results}
In this section we will first prove a weaker analogue of Theorem \ref{paleywiener} for $\R^n$. Next we will prove a complex analytic lemma which will then be used to prove an analogue of Theorem \ref{paleywiener} for $M(n)$. The main idea behind the proof of a several variable version of Theorem \ref{paleywiener} is to reduce matters to the one dimensional situation. This will be achieved by a simple application of the Radon transform. We will now briefly recall some standard facts regarding Radon transform which are important for us. We refer the reader to \cite{He} for proof of these results. 

For $\omega \in S^{n-1}$ and $t \in \R$ let $H_{\omega,t}=\{x\in\R^n:x \cdot \omega=t\}$ denote the hyperplane on $\R^n$ with normal $\omega$ and distance $|t|$ from the origin. For $f \in C_c(\R^n)$ the Radon transform $Rf$ of the function $f$ is defined by
\begin{equation}
Rf(\omega,t)=\int_{H_{\omega ,t}}f(x)dm(x),\nonumber
\end{equation} 
where $dm(x)$ is the $n-1$ dimensional Lebesgue measure of $H_{\omega,t}$. For $f \in L^1(\R^n)$ we define the Fourier transform $\hat{f}$ of $f$ by 
\begin{equation}
\hat{f} (y) = \int_{\R^n} f(x) e^{-ix \cdot y} dx,  \:\:\:\: \txt{ for } y \in \R^n.\nonumber
\end{equation} 
The one dimensional Fourier transform of $Rf$ and the Fourier transform of $f$ are closely connected by the slice projection theorem:
\begin{equation}
\label{sliceproj}
\hat{f}(\lambda\omega)= \mathcal F (Rf(\omega, \cdot))(\lambda),
\end{equation}
where $\mathcal F(Rf(\omega, \cdot))$ denotes the one dimensional Fourier transform of the function $t\mapsto Rf(\omega ,t)$.
Let $C_c^{\infty}(\R^n)_0$ denote the set of the compactly supported, smooth, radial functions on $\R^n$ and $C_c^{\infty}(\R)_e$ denote the set of compactly supported, smooth, even functions on $\R$. By Theorem 2.10, P. 14 of \cite{He} it is known that
\be
\label{radonmapping}
R:C_c^{\infty}(\R^n)_0\longrightarrow C_c^{\infty}(\R)_e
\ee
is a bijection. We are now in a position to state and prove an analogue of Theorem \ref{paleywiener} for $\R^n$.

\begin{thm}\label{ccradon}
Let $\theta:\R^n\rightarrow [0,\infty)$ be a radial, locally integrable function and
\begin{equation}
I=\int_{\|y\|\geq 1}\frac{\theta (y)}{\|y\|^{n+1}}dy.\label{integral}
\end{equation}
\begin{enumerate}
\item[a)] If $f\in C_c(\R^n)$ satisfies the estimate
\begin{equation}
|\hat{f}(y)|\leq Ce^{-\theta (y)},\label{estimate} \:\:\:\: \txt{ for } y \in \R^n,
\end{equation}
and $I=\infty$ then $f(x)=0$, for all $x\in\R^n$.
\item[b)] If the radial function $\theta$ on $\R^n$ is non-decreasing as a function on $[0,\infty)$ and $I$ is finite then there exists a non-zero radial $f\in C_c^{\infty}(\R^n)$ satisfying (\ref{estimate}).
\end{enumerate}
\end{thm}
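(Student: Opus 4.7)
The strategy is to reduce to the one-dimensional Theorem \ref{paleywiener} by combining the slice projection formula (\ref{sliceproj}) with the bijection (\ref{radonmapping}). Since $\theta$ is radial we may unambiguously set $\theta_0(r):=\theta(r\omega)$ for any $\omega\in S^{n-1}$, so that $\theta_0$ is non-negative and locally integrable on $[0,\infty)$ (and non-decreasing in the setting of part (b)). A polar-coordinate calculation gives
\[
I \;=\; |S^{n-1}|\int_1^\infty \frac{\theta_0(r)}{r^2}\,dr,
\]
and local integrability of $\theta_0$ near $0$ makes this equivalent to $\int_0^\infty \theta_0(r)/(1+r^2)\,dr$ (either both are finite or both diverge). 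That is precisely the input required by Theorem \ref{paleywiener}.

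For part (a), let $f\in C_c(\R^n)$ satisfy (\ref{estimate}). For every $\omega\in S^{n-1}$ the slice $g_\omega(t):=Rf(\omega,t)$ lies in $C_c(\R)\subset L^2(\R)$ and vanishes outside a fixed compact $t$-interval. By (\ref{sliceproj}) and the radiality of $\theta$,
\[
|\mathcal{F}g_\omega(\lambda)| \;=\; |\hat f(\lambda\omega)| \;\leq\; Ce^{-\theta(\lambda\omega)} \;=\; Ce^{-\theta_0(|\lambda|)}.
\]
Because $I=\infty$, Theorem \ref{paleywiener} forces $g_\omega\equiv 0$ as an $L^2$, and hence continuous, function. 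Thus $Rf\equiv 0$, and the injectivity of the Radon transform on $C_c(\R^n)$ yields $f\equiv 0$.

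For part (b), the heart of the matter is to produce a non-zero even function $g\in C_c^\infty(\R)$ with $|\mathcal{F}g(\lambda)|\leq Ce^{-\theta_0(|\lambda|)}$. Once this is in hand, the bijection (\ref{radonmapping}) gives a unique radial $f\in C_c^\infty(\R^n)$ with $Rf(\omega,\cdot)\equiv g$, and (\ref{sliceproj}) together with the radiality of $\hat f$ yields
\[
|\hat f(y)| \;=\; |\mathcal{F}g(\|y\|)| \;\leq\; Ce^{-\theta_0(\|y\|)} \;=\; Ce^{-\theta(y)},
\]
as required. The main obstacle is the construction of $g$ itself: Theorem \ref{paleywiener} as stated produces only an $L^2$ function vanishing on a half-line, whereas here $g$ must be smooth, even, and compactly supported. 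This gap is bridged by the classical construction of $g$ as an infinite convolution of scaled indicator functions $\tfrac{1}{a_k}\mathbf{1}_{[-a_k/2,\,a_k/2]}$, where $\{a_k\}$ is a summable sequence of positive numbers chosen so that the Fourier product $\prod_k\mathrm{sinc}(a_k\lambda/2)$ is dominated by $Ce^{-\theta_0(|\lambda|)}$; the monotonicity of $\theta_0$ together with $\int_0^\infty \theta_0(t)/(1+t^2)\,dt<\infty$ is precisely what permits such a choice, and this is the quantitative sufficiency direction of the Paley--Wiener principle.
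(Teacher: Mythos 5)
Your argument is correct and follows the paper's strategy almost exactly: both parts reduce to the one-dimensional Theorem \ref{paleywiener} via the slice projection theorem (\ref{sliceproj}), with the polar-coordinate identity $I=|S^{n-1}|\int_1^\infty \theta_0(r)r^{-2}\,dr$ converting the hypothesis on $I$ into the logarithmic integral condition, and with the bijection (\ref{radonmapping}) supplying the radial $f$ in part (b). The one genuine point of divergence is how the one-dimensional building block in (b) is produced. The paper quotes Lemma 4 of Levinson to obtain a non-zero $g_1\in C_c(\R)$ with $|\mathcal F g_1|\leq Ce^{-\theta}$ and then patches it in three steps: convolving with a $C_c^\infty$ function to gain smoothness, symmetrizing $g(x)=\tfrac12\bigl(g_1(x)+g_1(-x)\bigr)$ to gain evenness, and translating $g_1$ beforehand so that the symmetrization does not annihilate the function. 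You instead invoke the Ingham--Levinson infinite convolution of scaled indicators, whose Fourier transform is $\prod_k \mathrm{sinc}(a_k\lambda/2)$ with $\sum_k a_k<\infty$; this yields evenness, smoothness, compact support and non-vanishing in one stroke and is arguably tidier, though you leave the actual choice of $\{a_k\}$ --- the point where monotonicity of $\theta_0$ and finiteness of $\int_0^\infty\theta_0(t)(1+t^2)^{-1}dt$ really enter --- as an appeal to ``the classical construction,'' which is the same level of reliance on the literature as the paper's citation of Levinson. One shared loose end: for $n\geq 2$, local integrability of the radial $\theta$ on $\R^n$ does not formally imply local integrability of the profile $\theta_0$ near $0$ (the Jacobian $r^{n-1}$ helps the $n$-dimensional integral), so your claim that $\theta_0$ is locally integrable on $[0,\infty)$ --- needed to quote Theorem \ref{paleywiener} verbatim --- is the same silent step the paper takes.
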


\begin{proof}

To prove (a) we fix $\omega\in S^{n-1}$ and define $h_\omega(t) = Rf(\omega,t)$ for $t\in\R$. Clearly $h_\omega$ is compactly supported in $\R$. Since $\theta$ is radial, we can interpret it as an even function on $\R$ and from (\ref{sliceproj}) and (\ref{estimate}) we get that for $\lambda \in \R$, 
\begin{equation*}|\mathcal F(h_\omega)(\lambda)| = |\hat f(\lambda\omega)| \leq Ce^{-\theta(\lambda)}.
\end{equation*} 
As $\theta$ is a radial function it follows from the divergence of the integral $I$ that
\begin{equation*}
 \int_{0}^{\infty} {\frac{\theta(r)}{1+r^2}} dr = \infty.
\end{equation*} We now apply Theorem \ref{paleywiener} to conclude that  $h_\omega=0$. Since this holds for each $\omega\in S^{n-1}$, it follows from (\ref{sliceproj}) and uniqueness of the Fourier transform that $f=0$. This proves (a).

We now assume that $I$ is finite and construct a non-zero $f \in C_c^{\infty}(\R^n)$ as in (b). As before, using polar coordinates we now have that
\begin{equation*}
\int_{0}^{\infty} {\frac{\theta(r)}{1+r^2}} dr < \infty.
\end{equation*}
By Lemma 4 of \cite{L} there exists a non-zero $g_1 \in C_c(\R)$ such that 
\begin{equation} \label{g1decay}
|\mathcal F g_1(y)| \leq C e^{-\theta(y)},\:\:\:\:\text{for all $y \in \R$.}
\end{equation} 
Convolving with a $C_c^\infty(\R)$ function if necessary we may assume that $g_1 \in C_c^{\infty}(\R)$ and $g_1$ satisfies (\ref{g1decay}). We now consider the even function $g$ defined by  
\begin{equation*}
g(x) = \frac{g_1(x) + g_1(-x)}{2}, \:\:\:\:\txt{ for all $x \in \R$.}
\end{equation*}
Since the translates of $g_1$ also satisfy (\ref{g1decay}), we can ensure that $g$ is non-zero by translating $g_1$ suitably if necessary. 
It is clear that $g \in C_c^{\infty}(\R)$ is even with 
\begin{equation*}
|\mathcal F g(y)| \leq C e^{-\theta(y)},\:\:\:\:\txt{ for all $y \in \R$.}
\end{equation*} 
By (\ref{radonmapping}) we conclude that there exists a radial $f\in C_c^\infty(\R^n)$ such that $Rf=g$. It now follows from (\ref{sliceproj}) and the last inequality that the Fourier transform of $f$ satisfies the estimate
\begin{equation*}
| \hat{f}(y)|= |\mathcal F g(\|y\|)| \leq Ce^{-\theta(y)},\:\:\:\:\txt{for all $y\in\R^n$.}
\end{equation*}
This completes the proof of $b)$.
\end{proof}
\begin{rem}
The exact analogue of Theorem \ref{paleywiener} for $L^2$ functions supported on a half space in $\R^n$ is not hard to prove. This result and various other generalizations of Theorem \ref{paleywiener} will be discussed in a forthcoming paper of ours.
\end{rem}
Next, we will prove a complex analytic lemma on entire functions which will be used in the proof of the main theorem. The main idea of the proof of the following lemma can be traced back to \cite{Hi}.

\begin{lem}\label{lemma}
Let $f$ be an entire function on $\C$ and $\theta$ be a non-negative measurable even function on $\R$ such that for a positive constant $a$, we have 
\bea
|f(z)| &\leq& Ce^{a|z|}, \:\:\:\: \txt{ for all } z\in \C,\label{expdecay} \\ 
|f(x)| &\leq& Ce^{-\theta(x)}, \:\:\:\:  \txt{ for all } x\in \R. \label{decay}
\eea
If $\ds{\int_\R{\frac{\theta(t)}{1+t^2}dt}=\infty}$ then $f=0$ on $\C$.
\end{lem}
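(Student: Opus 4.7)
My plan is to reduce the lemma to a classical log-integrability statement for bounded holomorphic functions on the upper half plane. Note first that $|f(x)|\leq C$ on $\R$, since $\theta\geq 0$. A Phragm\'en--Lindel\"of argument applied separately in the first and second quadrants will then allow me to upgrade the global estimate $|f(z)|\leq Ce^{a|z|}$ to a half-plane bound of the form $|f(z)|\leq Ce^{a\,\Im z}$ for $\Im z\geq 0$; multiplying by the non-vanishing exponential $e^{iaz}$ produces a bounded analytic function on $\C_+$. The decay $|f(x)|\leq Ce^{-\theta(x)}$ on $\R$, together with the divergence of $\int\theta(t)/(1+t^2)\,dt$, will then force this bounded function to vanish identically.

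Concretely, set $F(z):=f(z)e^{iaz}$. On the real axis $|F(x)|=|f(x)|\leq C$, and on the positive imaginary axis $|F(iy)|=e^{-ay}|f(iy)|\leq e^{-ay}\cdot Ce^{ay}=C$. In each quadrant $\{\Re z>0,\Im z>0\}$ and $\{\Re z<0,\Im z>0\}$, $F$ satisfies $|F(z)|\leq Ce^{2a|z|}$, an exponential growth of order $1$, which is strictly less than the critical Phragm\'en--Lindel\"of order $\pi/(\pi/2)=2$ for a sector of opening $\pi/2$. Hence $|F(z)|\leq C$ throughout the closed upper half plane, i.e.\ $F\in H^\infty(\C_+)$.

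Now $F$ is a bounded holomorphic function on $\C_+$ whose boundary values satisfy $|F(x)|=|f(x)|\leq Ce^{-\theta(x)}$. By the classical log-integrability theorem for $H^\infty(\C_+)$ (a consequence of inner-outer factorization in the half plane, or, after the Cayley transform, of Jensen's formula in the disk), if $F\not\equiv 0$ then
\begin{equation*}
\int_\R\frac{\log|F(t)|}{1+t^2}\,dt\;>\;-\infty.
\end{equation*}
On the other hand,
\begin{equation*}
\int_\R\frac{\log|F(t)|}{1+t^2}\,dt\;\leq\;\int_\R\frac{\log C-\theta(t)}{1+t^2}\,dt\;=\;-\infty,
\end{equation*}
by the hypothesis on $\theta$. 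This contradiction forces $F\equiv 0$ on $\C_+$, and since $e^{iaz}\neq 0$, $f\equiv 0$ on $\C_+$; analytic continuation then yields $f\equiv 0$ on $\C$.

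The delicate step is the Phragm\'en--Lindel\"of reduction: in the whole upper half plane (opening $\pi$) the critical order of growth is exactly $1$, matching the growth of $F$, so the principle does not apply directly. Splitting into the two quadrants halves the opening and raises the critical order to $2$, which makes the argument sub-critical; the crucial input for this splitting is the boundedness of $F$ on the positive imaginary axis, which uses both the global exponential type bound on $f$ and the boundedness of $f$ on $\R$. The remainder of the argument is a standard invocation of Hardy-space theory on the half plane.
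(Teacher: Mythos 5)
Your proof is correct, and its first half coincides exactly with the paper's: both multiply $f$ by $e^{iaz}$ and apply Phragm\'en--Lindel\"of separately in the two quadrants of the upper half plane (where the opening $\pi/2$ makes the order-one growth subcritical, the positive imaginary axis being controlled by the type bound and the real axis by $\theta\geq 0$) to produce a bounded analytic function $F$ on $\mathbb{H}$ with $|F(x)|\leq Ce^{-\theta(x)}$ on $\R$. Where you diverge is in the endgame. You quote the classical theorem that a non-identically-zero $H^\infty(\mathbb{H})$ function satisfies $\int_\R \log|F(t)|(1+t^2)^{-1}\,dt>-\infty$, which immediately contradicts the divergence of $\int_\R\theta(t)(1+t^2)^{-1}\,dt$. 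The paper instead proves precisely this fact by hand for its particular $F$: it approximates the subharmonic function $\log|F|$ from above by a decreasing sequence of bounded continuous functions $v_n$ (upper-semicontinuous approximation plus truncation at $-n$), forms the Poisson integral $U_n$ of $v_n|_\R$, applies a maximum principle for bounded-above subharmonic functions on the unbounded region $\mathbb{H}$ to get $\log|F|\leq U_n$, and lets $n\to\infty$ by monotone convergence to conclude that $\log|F(x+iy)|$ is dominated by the Poisson integral of $\log|F(t)|$, which is $-\infty$. Your route is shorter at the price of importing Hardy-space theory (inner--outer factorization, or Jensen's formula after the Cayley transform); the paper's is self-contained modulo two quotable facts about upper semicontinuous functions and the subharmonic maximum principle, and is essentially a direct proof of the theorem you cite in the special case at hand. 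One point that makes your citation clean: since $F$ extends continuously to $\overline{\mathbb{H}}$, the a.e.\ non-tangential boundary values in the $H^\infty$ theorem are just the restriction $F|_\R$, so no boundary-value subtleties arise.
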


To prove the lemma, we will need two results. The first one is a standard result regarding upper semicontinuous functions. 
\begin{thm} [Theorem 3.6, P. 218, \cite{C}] \label{uppsemicont}
 Let $(X,d)$ be a metric space, $v: X\rightarrow [-\infty, \infty)$ be upper semi-continuous and $v\leq M< \infty$ on $X$. Then there exists a decreasing sequence of uniformly continuous functions $\{f_n\}$ on $X$ such that $f_n\leq M$ and for every $x\in X$, $f_n(x)$ decreases to $v(x)$.
\end{thm}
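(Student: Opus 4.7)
The plan is to produce the approximating sequence by a Moreau--Yosida / inf-convolution regularization, which is the standard device for replacing an upper semi-continuous function by Lipschitz ones from above. Concretely, I would set
\[
f_n(x) \;=\; \sup_{y \in X} \bigl\{ v(y) - n\, d(x,y) \bigr\}, \qquad n = 1, 2, \ldots,
\]
and verify the four required properties: $f_n \leq M$, $f_n$ is uniformly continuous, $f_n \geq f_{n+1}$, and $f_n(x) \downarrow v(x)$ for every $x \in X$.

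The first three properties are essentially formal. Since $v(y) \leq M$ and $d(x,y) \geq 0$, each quantity $v(y) - n\,d(x,y)$ is bounded above by $M$, giving $f_n \leq M$. Taking $y = x$ shows $f_n(x) \geq v(x)$. Monotonicity in $n$ follows because at $y = x$ the expression is constant while at any $y \neq x$ it strictly decreases in $n$; taking the supremum preserves the inequality. For uniform continuity, I would use the triangle inequality $d(x_1, y) \leq d(x_2, y) + d(x_1, x_2)$ to obtain
\[
v(y) - n\,d(x_1, y) \;\geq\; v(y) - n\,d(x_2, y) - n\,d(x_1, x_2),
\]
and the symmetric counterpart; taking the supremum over $y$ gives the $n$-Lipschitz estimate $|f_n(x_1) - f_n(x_2)| \leq n\,d(x_1,x_2)$, so in particular $f_n$ is uniformly continuous on $(X,d)$.

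The main work is in proving pointwise convergence $f_n(x) \downarrow v(x)$, and this is where upper semi-continuity enters. Fix $x \in X$. Given $\varepsilon > 0$, choose $\delta > 0$ so that $v(y) < v(x) + \varepsilon$ whenever $d(y,x) < \delta$ (using upper semi-continuity at $x$; in the case $v(x) = -\infty$, replace $v(x) + \varepsilon$ by an arbitrarily large negative $-K$). Split the supremum in the definition of $f_n(x)$ according to whether $d(y,x) < \delta$ or $d(y,x) \geq \delta$: on the near set we bound $v(y) - n\,d(x,y) \leq v(x) + \varepsilon$, while on the far set we bound $v(y) - n\,d(x,y) \leq M - n\delta$. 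Choosing $n$ large enough that $M - n\delta \leq v(x) + \varepsilon$ yields $f_n(x) \leq v(x) + \varepsilon$. Combined with $f_n(x) \geq v(x)$ from step~(ii), this proves $\lim_n f_n(x) = v(x)$, and monotonicity upgrades the limit to a decrease. The $v(x) = -\infty$ case is handled identically by letting $K \to \infty$.

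The only real obstacle is the convergence step, and specifically making sure the argument is uniform enough to handle the value $-\infty$: this is why I split into near and far regions, so that the contribution from the far region is killed by the Lipschitz penalty $-n\,d(x,y)$ regardless of whether $v(x)$ is finite. Everything else (supremum manipulations, monotonicity, Lipschitz estimate) is formal bookkeeping and does not require any structure on $X$ beyond the metric.
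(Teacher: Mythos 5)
The paper does not prove this statement at all --- it is quoted verbatim from Conway \cite{C} --- so there is no internal proof to compare against; your sup-convolution argument $f_n(x)=\sup_y\{v(y)-n\,d(x,y)\}$ is correct, and it is in fact the standard (essentially Conway's own) proof of this regularization lemma. The one point you leave untreated is the degenerate case $v\equiv-\infty$, where your $f_n$ is identically $-\infty$ and hence not a real-valued uniformly continuous function; it must be disposed of separately (e.g.\ by taking $f_n\equiv -n$), while in every other case some $y_0$ has $v(y_0)>-\infty$ and then $f_n(x)\geq v(y_0)-n\,d(x,y_0)>-\infty$ everywhere, so your Lipschitz estimate and the rest of the argument go through as written.
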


We will also need an analogue of the maximum modulus principle on unbounded domain for subharmonic functions. We briefly recall the definition of subharmonic functions. Let $D$ be an open subset of $\C$. A function $u : D \ra [-\infty, \infty)$ is called subharmonic if $u$ is upper semi-continuous and satisfies the local submean inequality, that is,  for any $w \in D$ there exists $\rho > 0$ such that for all $r\in (0, \rho)$ the following holds
\begin{equation*} 
u(w) \leq \frac{1}{2\pi} \int_0^{2\pi} u(w + r e^{it}) dt .
\end{equation*}
It is well known that if $f$ is a holomorphic function then $g(z)=\log (|f(z)|)$ is a subharmonic function (\cite{R}, P. 336). 
\begin{thm} [Theorem 7.15, P. 224, \cite{B}] \label{maxmod}
 Let $\Omega$ be a region (not necessarily bounded) and $u: \Omega \rightarrow \R$ be a subharmonic function which is bounded above. Let $A$ be a proper, countable subset of the boundary $\partial \Omega$ of $\Omega$ and $M$ a finite constant such that $\ds{\varlimsup_{z \to \xi} u(z)\leq M}$ for all $\xi \in \partial \Omega\smallsetminus  A$. Then $u \leq M$ throughout $\Omega$.
\end{thm}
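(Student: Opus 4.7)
The plan is to reduce Theorem~\ref{maxmod} to the classical maximum principle for subharmonic functions on bounded regions, via two standard devices: a subharmonic barrier that drives an auxiliary function to $-\infty$ at every point of the countable exceptional set $A$, and a Phragm\'en--Lindel\"of-type truncation to a large disc that handles the unboundedness of $\Omega$. After subtracting $M$ from $u$, assume $M=0$, so the task becomes: given $u\le K$ subharmonic on $\Omega$ with $\limsup_{z\to\xi}u(z)\le 0$ for every $\xi\in\partial\Omega\setminus A$, show $u\le 0$ on $\Omega$. Fix a test point $z_0\in\Omega$.

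First I would construct the barrier. Enumerate $A=\{a_k\}_{k\geq 1}$ and, for each $R>|z_0|$, set
\[
\phi_R(z) \;=\; \sum_{k=1}^{\infty} 2^{-k}\log\frac{|z-a_k|}{R+|a_k|}.
\]
Each summand is subharmonic on $\C$ and non-positive on $\overline{D(0,R)}$ by the triangle inequality; a routine estimate of the tails on compact subsets away from $A$ shows the series defines a subharmonic function on $\C\setminus A$ with $\phi_R\le 0$ on $\overline{D(0,R)}$ and $\phi_R(z)\to-\infty$ whenever $z$ approaches some $a_k$. For $\epsilon>0$, the function $u_\epsilon:=u+\epsilon\phi_R$ is subharmonic on $\Omega_R:=\Omega\cap D(0,R)$ and satisfies $\limsup_{z\to\xi}u_\epsilon(z)\le 0$ for every $\xi\in\partial\Omega\cap\overline{D(0,R)}$, since the original hypothesis handles $\xi\notin A$ while the barrier drives $u_\epsilon$ to $-\infty$ at each $\xi=a_k$.

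It remains to control $u_\epsilon$ on the outer arc $\Gamma_R:=\partial D(0,R)\cap\Omega$, where we only know $u_\epsilon\le K$. Let $\omega_R(z):=\omega(z,\Gamma_R;\Omega_R)$ denote the harmonic measure of the outer arc. Applying the classical maximum principle on the bounded region $\Omega_R$ to $u_\epsilon-K\omega_R$ (which has non-positive $\limsup$ on \emph{all} of $\partial\Omega_R$) yields
\[
u(z_0) \;\le\; K\,\omega_R(z_0) \;-\; \epsilon\,\phi_R(z_0).
\]
Because $z_0\notin A$, the quantity $\phi_R(z_0)$ is a \emph{finite} negative real number; letting $\epsilon\downarrow 0$ with $R$ fixed gives $u(z_0)\le K\,\omega_R(z_0)$, and then sending $R\to\infty$ yields $u(z_0)\le 0$ provided $\omega_R(z_0)\to 0$. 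This harmonic-measure decay is the main obstacle and the essential use of the "bounded above" hypothesis together with properness of $A$ in $\partial\Omega$: in the generic case where $\partial\Omega\setminus A$ is non-polar, the decay follows from standard potential theory, while in the borderline case where $\partial\Omega$ itself is polar, one instead removes the entire boundary via a Kelvin transform and Riemann-type removable-singularity argument for bounded subharmonic functions, reducing to Liouville's theorem on $\C$.
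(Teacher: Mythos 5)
The paper does not actually prove this statement: it is quoted verbatim as Theorem 7.15 of Burckel's book \cite{B} and used as a black box in the proof of Lemma \ref{lemma}, so there is no internal proof to compare against. Your sketch is the standard argument for the extended maximum principle with a polar (here countable) exceptional set, and its architecture --- a weighted logarithmic barrier $\sum_k c_k \log|z-a_k|$ to annihilate the exceptional boundary points, truncation to $\Omega \cap D(0,R)$, and the harmonic measure of the outer arc to handle unboundedness --- is essentially the proof found in the cited source. Two points need repair before the sketch is a proof. First, as written $\phi_R$ may be identically $-\infty$: if $A$ is unbounded and $|a_k|$ grows super-exponentially, then $\sum_k 2^{-k}\log(R+|a_k|)$ diverges. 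This is harmless, because only those $a_k$ lying in $\overline{D(0,R)}$ can be boundary points of $\Omega_R$, so the sum should be restricted to those indices (or the weights made to depend on $|a_k|$). Second, the assertion $\omega_R(z_0) \to 0$ is the real content of the unbounded case and is not routine: it amounts to the fact that if $\C \smallsetminus \Omega$ is non-polar then a non-polar compact piece of $\partial\Omega$ carries an equilibrium potential furnishing a barrier at infinity (probabilistically, planar Brownian motion from $z_0$ hits the non-polar complement almost surely), and it genuinely uses dimension two; this step deserves an explicit argument or citation rather than the phrase ``standard potential theory.'' Your case split is exhaustive, since $A$ countable is polar, so $\partial\Omega \smallsetminus A$ is polar if and only if $\partial\Omega$ is; and in the polar-boundary case your reduction (then $\Omega = \C \smallsetminus \partial\Omega$, $u$ extends subharmonically across the closed polar set, a bounded-above subharmonic function on $\C$ is constant, and the constant is pinned to be $\leq M$ at any point of $\partial\Omega \smallsetminus A \neq \emptyset$) is correct and is precisely where the properness of $A$ is used. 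With these two repairs the argument is complete.
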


\noindent \textit{Proof of Lemma \ref{lemma}:}
We consider the function 
\begin{equation*}
g(z)=\frac{1}{C}e^{iaz}f(z),\:\:\:\:\text{for all $z\in\C$,}
\end{equation*}
and observe that $g$ is an entire function. We want to apply Phragm\'{e}n-Lindel\"of theorem (Theorem 3.4, P. 124, \cite{SS}) to show that for all $z$ in the closed upper half plane $\overline{\mathbb H}=\{z\in \C: \Im z \geq 0\}$ 
\begin{equation}
|g(z)|\leq 1.\label{boundone}
\end{equation}
Let $Q_1 = \{z = x + iy \in \mathbb C: x > 0, y>0\}$. 
It follows from the estimate (\ref{expdecay}) that
\begin{equation*}
|g(iy)|=\frac{1}{C}e^{-ay}|f(iy)|\leq e^{-ay}e^{ay}=1,\:\:\:\:\text{for all $y>0$.}
\end{equation*}
It is also immediate from (\ref{decay}) that for all $x\in\R$
\begin{equation*}
|g(x)|=\frac{1}{C}|f(x)| \leq e^{-\theta(x)}\leq 1.
\end{equation*}
In particular, $g$ is bounded by $1$ on the positive real and positive imaginary axes. As $g$ satisfies the estimate (\ref{expdecay}) we can apply the Phragm\'{e}n-Lindel\"of theorem to the sector $Q_1$ to obtain (\ref{boundone}). A similar argument for the quadrant $Q_2 = \{z = x + iy \in \mathbb C: x < 0, y>0\}$ proves the estimate (\ref{boundone}) for all $z \in \overline{\mathbb H}$. Since $g$ is an entire function $\log |g|$ is subharmonic on $\C$ and 
\be 
\label{bound} \log |g(z)| \leq 0,\:\:\:\:\text{for all $z \in  \overline{\mathbb H}$.} 
\ee  
Now, we apply Theorem \ref{uppsemicont} for $X=\overline{\mathbb H}, v=\log |g|$, and $M=0$. Then there exists a decreasing sequence of uniformly continuous functions ${f_n}$ on $\overline{\mathbb H}$ such that $f_n \leq 0$ and $f_n(z)$ decreases to $v(z)$ for every $z\in \overline{\mathbb H }$. We define 
\begin{equation*}
v_n(z)= \max\{f_n(z), -n\},\:\:\:\:z\in \overline{\mathbb H},\:\:n \in \mathbb N.
\end{equation*} 
It is clear that $\{v_n\}$ is a decreasing sequence of continuous functions. As $f_n$ takes only negative values it follows that $v_n(z)\in [-n,0]$ for all $z\in\overline{\mathbb H}$. In particular, $v_n$ is bounded for each $n\in\mathbb N$. We now claim that $\{v_n\}$ converges pointwise to $v$ on $\overline{\mathbb H}$. We first assume that $v(z)=-\infty$. Then $\{f_n(z)\}$ and $\{-n\}$ both converge to $-\infty$ and hence so does $\{v_n(z)\}$. Now assume that $v(z)$ is finite. In this case $f_n(z)\in (v(z)-1, v(z)+1)$ for all large $n$ and hence $v_n(z)=f_n(z)$ for all large $n\in\mathbb N$. It follows that $\{v_n(z)\}$ converges to $v(z)$ for all $z\in\overline{\mathbb H}$.
Let $U_n$ be the Poisson integral of the restriction of the function $v_n$ on $\R$ given by
\be \label{U_n}
U_n(x+iy)=\frac{1}{\pi}\displaystyle\int_{\R}{\frac{yv_n(t)}{y^2+(x-t)^2}dt},\:\:\:\:x\in\R,\:y>0.
\ee
Since $v_n\in L^\infty(\R)$, the above integral exists and defines a harmonic function on the open upper half plane $\mathbb H=\{z\in \C: \Im z>0\}$. Moreover, since $v_n$ is continuous, we can extend $U_n$ to $\overline{\mathbb H}$ as a continuous function by letting $U_n(x) = v_n(x)$ for $x \in \R$ (Theorem 2.1(b), P. 47, \cite{SW}).
We now define 
\begin{equation}
V_n(z) = \log|g(z)|- U_n(z),\:\:\:\:z\in\mathbb H.\label{Vn}
\end{equation}
As $U_n$ is harmonic it follows that $V_n$ is subharmonic on $\mathbb H$. Since
\begin{equation*}
v_n(t) \geq -n,\:\:\:\:\text{for all $t\in\R$,}
\end{equation*}
it follows from the definition of $U_n$ given in (\ref{U_n}) that 
\begin{equation*}
U_n(z)\geq -n,\:\:\:\:z\in\mathbb H.
\end{equation*}
It now follows from (\ref{bound}) that 
\begin{equation*}
V_n(z) \leq n,\:\:\:\:\text{for all $z\in \mathbb H$.}
\end{equation*}
In particular, $V_n$ is bounded above for each $n\in\mathbb N$. Since 
\begin{equation*}
v(z) = \log|g(z)| \leq v_n(z),\:\:\:\:\text{for all $z \in \overline{\mathbb H}$,}
\end{equation*}
it follows that
\begin{equation*}
\lim_{y \to 0} V_n(x+iy)=\log |g(x)|-v_n(x) \leq 0,\:\:\:\:\text{for all $x\in\R$.}
\end{equation*}
We now apply Theorem \ref{maxmod} for $\Omega= \mathbb H$, $u= V_n$ and $A= \phi$, the empty set to conclude that
\begin{equation*}
V_n(z) \leq 0,\:\:\:\:\text{for all $z\in \mathbb H$.}
\end{equation*}
It follows from (\ref{Vn}) that 
\begin{equation*}
\log|g(x+iy)|\leq \frac{1}{\pi}\int_{\R}{\frac{yv_n(t)}{y^2+(x-t)^2}dt}, \:\:\:\:\text{for all $y>0$, $x\in\R$}.
\end{equation*} 
Since $\{v_n\}$ is a decreasing sequence, by using monotone convergence theorem and taking limit as $n\rightarrow \infty$ in the inequality above we get \bes
\log|g(x+iy)| \leq \frac{1}{\pi}\displaystyle\int_{\R}{\frac{y\log|g(t)|}{y^2+(x-t)^2}dt}.
\ees 
The estimate (\ref{decay}) now implies that
\begin{equation*}
\log|g(x+iy)| \leq -\frac{1}{\pi}\int_{\R}{\frac{y \theta(t)}{y^2+(x-t)^2}dt} \leq -C_{x,y}\int_{\R}{\frac{\theta(t)}{1+t^2}dt} = - \infty,
\end{equation*}
where $C_{x,y}$ is a positive constant which depends on $x$ and $y$.
So, for each $x\in \R$ and $y$ positive, it follows that $g(x+iy)=0$. As $f$ is an entire function it follows that $f(z)=0$ for all $z\in \C$.

\section{Euclidean Motion group}
Let $G=M(n)$ be the Euclidean motion group given by the semi-direct product of $\R^n$ with the special orthogonal group $K= SO(n)$. The group operation in $G$ is given by 
\begin{equation*}
 (x_1,k_1)(x_2,k_2) = (x_1 + k_1 \cdot x_2, k_1k_2),
\end{equation*} 
for $(x_1, k_1)$, $(x_2,k_2)$ in $G$ where $x_i \in \R^n$ and $k_i \in K$, $i=1,2$  and $k \cdot x$ denotes the natural action of $SO(n)$ on $\R^n$. If $dx$ denotes the Lebesgue measure on $\R^n$ and $dk$ the normalized Haar measure on $K$, then the Haar measure on $G$ is given by $dx~dk$. 

We shall now describe the unitary dual $\widehat G$, the equivalence classes of unitary, irreducible representations of $G$, as given in \cite{GK}. We consider a fixed non-zero $\xi \in \R^n$. If $U_\xi$ denotes the stabilizer of $\xi$ in $K$ under the natural action of $K$ on $\R^n$, then $U_\xi$ is conjugate to the subgroup 
\begin{equation*}\left\{\left(\begin{array}{cc}
      A & 0\\ 
      0 & 1
     \end{array}
\right):A\in SO(n-1)\right\},
\end{equation*}
 which we identify with $SO(n-1)$. Let $\lambda$ be an irreducible unitary representation of the compact group $U_\xi$ acting on $\C^{d_\lambda}$. Let
\beas
&&H(K,\lambda)\\&&=\{\psi : K \rightarrow \C^{d_\lambda}\:\:\text{measurable}\mid \psi(uk)=\lambda(u)\psi(k),\:u \in U_\xi,k\in K,  \int_K\|\psi(k)\|_{\C^{d_{\lambda}}}^2dk<\infty  \}.
\eeas
$H(K, \lambda)$ is a Hilbert space with respect to the inner product defined by the formula
\begin{equation*} 
\langle \psi_1, \psi_2 \rangle_{H(K,\lambda)} = d_\lambda\int_{K}{\langle\psi_1(k), \psi_2(k)\rangle_{\C^{d_\lambda}}dk},\:\:\:\:\psi_i\in H(K,\lambda), i=1,2.
\end{equation*} 
Now, we define a unitary representation $T_{\xi,\lambda}$ of $G$ on $H(K,\lambda)$ by 
\bes
\left(T_{\xi,\lambda}(x,k)\psi\right)(k_0)=e^{i\langle k_0^{-1}\cdot \xi,~ x\rangle}\psi(k_0k),
\ees
where $\psi\in H(K,\lambda), x\in \R^n$, and $k,k_0\in K$. It can be shown that (see \cite{GK}):
\begin{enumerate}
\item For $\xi\neq 0$ and any $\lambda\in \widehat U_\xi$, the representation $T_{\xi,\lambda}$ is irreducible.
\item Every infinite dimensional irreducible unitary representation of $G$ is equivalent to some  $T_{\xi,\lambda}$ with $\xi$ and $\lambda$ as above.
\item Given two non-zero vectors $\xi, \xi_1\in \R^n$ and representations $\lambda\in \widehat U_{\xi}$ and $\lambda_1\in \widehat U_{\xi_1}$, the representations $T_{\xi, \lambda}$ and $T_{\xi_1, \lambda_1}$ are equivalent if and only if $\xi$ and $\xi_1$ are in same $K$-orbit i.e. $\xi$ and $\xi_1$ have same Euclidean norm and the representations $\lambda$ and $\lambda_1$ are equivalent under the obvious identification of $U_\xi$ and $U_{\xi_1}$.
\end{enumerate}
It follows from the above equivalence of representations that instead of $T_{\xi,\lambda}$, $\xi\neq 0$, we can work with $T_{r,\lambda}$ for $r>0$ with $\|\xi\|=r$ and view it as $(r,0,\ldots ,0)\in\R^n$. We can also identify $U_r$ with $K_1 = SO(n-1)$. Apart from these infinite dimensional representations $T_{r,\lambda}$, the finite dimensional unitary representations of $K$ also yield finite dimensional unitary representations of $G$, but these do not enter into the Plancherel formula (see \cite{GK} for details). The Plancherel measure is supported on the subset of $\widehat G$ given by $\{T_{r,\lambda}: \lambda\in \widehat {K_1}, r\in \R^+\}$ and on 
each `piece' $\{T_{r,\lambda}: r\in \R^+\}$ with $\lambda\in \widehat{K_1}$ fixed, it is given by $C_n r^{n-1}dr$, where $C_n$ is a constant depending only on $n$.

Given a function $f \in L^1(G)$ and $\pi \in \widehat{G}$, the operator-valued group Fourier transform $\widehat{f}$ of $f$ at $\pi$ is given by the operator valued integral 
\begin{equation*}
\widehat{f}(\pi) = \int_{K} \int_{\R^n} f(x,k) \pi(x,k) dx dk.
\end{equation*}
It is known that for $f \in L^1 \cap L^2 (G)$ the group Fourier transform $\widehat{f}(\pi)$ is a Hilbert-Schmidt operator for almost all $\pi$ with respect to the Plancherel measure and we denote its Hilbert-Schmidt norm by $\|\widehat{f}(\pi)\|_{HS}$. We will now state and prove an analogue of Theorem \ref{paleywiener} for $M(n)$ in terms of the decay of the Hilbert-Schmidt norm of the group Fourier transform.

\begin{thm}\label{mn}
Let $f$ be a compactly supported continuous function on $M(n)$ satisfying the estimate  
\begin{equation}
 \label{mndecay} \|\widehat f(T_{r,\lambda})\|_{HS}\leq C_\lambda e^{-\theta(r)}, \:\:\:\: \txt{ for }r\in (0,\infty),
\end{equation}
where $\theta$ is a non-negative locally integrable function on $[0,\infty)$. If 
\begin{equation*}
I = \int_{0}^{\infty}{\frac{\theta(r)}{1+r^2}dr}=\infty,
\end{equation*} 
then $f= 0$ on $M(n)$.

Conversely, if $\theta : [0,\infty) \ra [0,\infty)$ is a non-decreasing function such that $I$ is finite, then there exists a non-zero $f \in C_c(M(n))$ satisfying (\ref{mndecay}). 
\end{thm}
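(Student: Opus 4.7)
For the forward direction the plan is to apply Lemma \ref{lemma} to a matrix coefficient of $\widehat f(T_{\cdot,\lambda})$ viewed as a function of a complex variable. Fix $\lambda\in\widehat{K_1}$ and $\psi_1,\psi_2\in H(K,\lambda)$, and for $z\in\C$ set $\xi_z=(z,0,\ldots,0)\in\C^n$. Define
\[
\phi(z):=d_\lambda\int_K\int_K\int_{\R^n} f(x,k)\, e^{i\langle k_0^{-1}\xi_z,\,x\rangle}\,\langle\psi_1(k_0k),\psi_2(k_0)\rangle_{\C^{d_\lambda}}\,dx\,dk\,dk_0,
\]
which agrees with $\langle\widehat f(T_{r,\lambda})\psi_1,\psi_2\rangle$ when $z=r\in\R$. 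Since $f$ is compactly supported, say with $\txt{supp}(f)$ contained in $\{x\in\R^n:\|x\|\leq R\}\times K$, $\phi$ is entire on $\C$, and $|\langle k_0^{-1}\xi_z,x\rangle|\leq R|z|$ on $\txt{supp}(f)$ yields $|\phi(z)|\leq A\,e^{R|z|}$, which is the growth hypothesis of Lemma \ref{lemma}.

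The next step is to establish two-sided decay of $\phi$ on $\R$. For $r>0$ the hypothesis (\ref{mndecay}) together with Cauchy-Schwarz gives $|\phi(r)|\leq \|\psi_1\|\|\psi_2\|\,C_\lambda e^{-\theta(r)}$. For $r<0$ the same formula still defines a unitary representation of $M(n)$, and since $\xi_r$ lies in the $K$-orbit of $\xi_{|r|}$, property (3) of the classification of $\widehat{M(n)}$ recalled in the preliminaries yields a unitary equivalence $T_{r,\lambda}\cong T_{|r|,\lambda'}$ for some $\lambda'\in\widehat{K_1}$; unitary invariance of the Hilbert-Schmidt norm then gives $\|\widehat f(T_{r,\lambda})\|_{HS}=\|\widehat f(T_{|r|,\lambda'})\|_{HS}\leq C_{\lambda'}e^{-\theta(|r|)}$, hence $|\phi(r)|\leq C'e^{-\theta(|r|)}$. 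Applying Lemma \ref{lemma} with the even extension $\tilde\theta(t):=\theta(|t|)$, whose integral against $dt/(1+t^2)$ is $2I=\infty$, forces $\phi\equiv 0$. Since $\psi_1,\psi_2,\lambda$ were arbitrary, $\widehat f(T_{r,\lambda})=0$ for every $r>0$ and every $\lambda\in\widehat{K_1}$; the Plancherel theorem for $M(n)$ then gives $f=0$ in $L^2(M(n))$, and continuity upgrades this to $f\equiv 0$.

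For the converse I would feed $\theta$ into Theorem \ref{ccradon}(b) to obtain a non-zero radial $g\in C_c^\infty(\R^n)$ with $|\hat g(\xi)|\leq Ce^{-\theta(\|\xi\|)}$ and put $f(x,k):=g(x)$, a non-zero element of $C_c^\infty(M(n))$. A one-line computation using left-invariance of the Haar measure on $K$ gives
\[
(\widehat f(T_{r,\lambda})\psi)(k_0)=\hat g(-k_0^{-1}\xi_r)\int_K\psi(k)\,dk.
\]
For non-trivial $\lambda$, the vector $v:=\int_K\psi(k)\,dk\in\C^{d_\lambda}$ satisfies $\lambda(u)v=v$ for every $u\in U_{\xi_r}$ (by the $\lambda$-equivariance of $\psi$), so irreducibility of $\lambda$ forces $v=0$ and $\widehat f(T_{r,\lambda})\equiv 0$. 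For the trivial representation the operator is rank one, and radiality of $g$ makes $|\hat g(-k_0^{-1}\xi_r)|=|\hat g(\xi_r)|$, bounding its Hilbert-Schmidt norm by a constant times $|\hat g(\xi_r)|\leq Ce^{-\theta(r)}$. The main obstacle throughout is the passage to negative $r$ in part (a): hypothesis (\ref{mndecay}) is only stated on $(0,\infty)$, while Lemma \ref{lemma} requires two-sided decay, and this gap is closed exactly by the unitary equivalence $T_{r,\lambda}\cong T_{|r|,\lambda'}$ inherent to the classification of $\widehat{M(n)}$, after which the rest of the argument reduces to Lemma \ref{lemma}, Plancherel on $M(n)$, and a direct lift from Theorem \ref{ccradon}(b).
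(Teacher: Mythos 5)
Your proposal is correct and follows essentially the same route as the paper: analytically continue a matrix coefficient of $\widehat f(T_{r,\lambda})$ in the parameter $r$, use compact support of $f$ for the exponential growth bound and the equivalence $T_{-r,\lambda}\cong T_{r,\lambda'}$ to get two-sided decay, apply Lemma \ref{lemma}, and conclude by Plancherel; the converse likewise lifts the radial function from Theorem \ref{ccradon}(b) to a bi-$K$-invariant function, with your direct computation of $(\widehat f(T_{r,\lambda})\psi)(k_0)$ replacing the paper's appeal to the Gelfand pair structure. The only differences are cosmetic (arbitrary vectors $\psi_1,\psi_2$ versus a $K$-finite basis, and a more explicit treatment of negative $r$).
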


\begin{proof}
Since $T_{-r,\lambda}$ and $T_{r,\lambda}$ are equivalent as representations of $M(n)$, we can extend $\theta$ as an even function on $\R$ to get that 
\bes \|\widehat f(T_{r,\lambda})\|_{HS}\leq C_\lambda e^{-\theta(r)}, \:\:\:\: \textmd{ for all } r\in \R.\\ \ees  For $r\in \R$ and $\lambda\in \widehat {K_1}$, let $\{e^\lambda_i : i\in \mathbb N\}$ be a basis of $H(K,\lambda)$
consisting of $K$-finite vectors. It suffices to show that for any fixed $i,j\in \mathbb N$, 
\bes \langle \widehat f(T_{r, \lambda})e^\lambda_i, e^\lambda_j\rangle_{H(K,\lambda)}=0\\ \ees
as a function of $r$ and $\lambda$. Fix $i_0, j_0 \in \mathbb N$ and consider for $r\in \R$ and $\lambda\in\widehat{K_1}$ the matrix entry of the group Fourier transform $\widehat{f}(T_{r,\lambda})$ given by
\begin{equation*}
\langle \widehat f(T_{r, \lambda})e^\lambda_{i_0}, e^\lambda_{j_0}\rangle_{H(K,\lambda)} =\int_{K}{\int_{\R^n}{f(x,k)\langle T_{r, \lambda}(x,k)e^\lambda_{i_0},e^\lambda_{j_0}\rangle_{H(K,\lambda)}}dx~dk}.
\end{equation*} 
Let us define
\begin{equation*}
\Phi^{i_0, j_0}_{r, \lambda}(x,k):= \langle T_{r, \lambda}(x,k)e^\lambda_{i_0},e^\lambda_{j_0}\rangle_{H(K,\lambda)},\:\:\:\:r\in \R,\lambda\in \widehat{K_1},
(x,k) \in M(n).
\end{equation*}
Using the description of the representation $T_{r,\lambda}$ we write
\begin{eqnarray}
 \Phi^{i_0, j_0}_{r, \lambda}(x,k) &=& d_\lambda\int_{K}{\langle (T_{r,\lambda}(x,k)e^\lambda_{i_0})(k_0), e^\lambda_{j_0}(k_0)\rangle_{\C^{d_\lambda}} dk_0}\nonumber\\
&=& d_\lambda\int_{K}{e^{i\langle k_0^{-1}\cdot r, ~x\rangle}\langle e^\lambda_{i_0}(k_0k), e^\lambda_{j_0}(k_0)\rangle_{\C^{d_\lambda}} dk_0}\nonumber\\
&=& d_\lambda\int_{K}{e^{i\langle r,~k_0\cdot x\rangle}\langle e^\lambda_{i_0}(k_0 k), e^\lambda_{j_0}(k_0)\rangle_{\C^{d_\lambda}} dk_0},\nonumber
\end{eqnarray}
where, as before, $r$ is identified with $(0, \cdots, 0, r) \in \R^n$. The integral on the right hand side makes sense even for $r\in \C$ and for fixed $(x,k)$, the function $r \mapsto \Phi^{i_0, j_0}_{r, \lambda}(x,k)$ extends to the whole of $\C$ as an entire function. For $r=t+is\in\C$ we easily obtain the estimate 
\begin{equation}
|\Phi^{i_0,j_0}_{r, \lambda}(x,k)| \leq C_\lambda \int_{K}{|e^{i\langle r,~ k_0 \cdot x\rangle}|dk_0} \leq C_\lambda e^{|r| \|x\|},\:\:\:\:x\in\R^n, k\in K.\label{matrixestimate} 
\end{equation}
For $r \in \C$ we define 
\begin{equation*}
g(r) := \langle \widehat f(T_{r,\lambda})e^\lambda_{i_0}, e^\lambda_{j_0}\rangle_{H(K,\lambda)} = \int_{K}\int_{\R^n} f(x,k) \Phi^{i_0,j_0}_{r, \lambda}(x,k) ~dx~dk .
\end{equation*} 
Since $f$ is compactly supported the function above is well defined. Moreover, it is an entire function on $\C$. It follows from (\ref{matrixestimate}) that there exists $A>0$ such that for all $r \in \C$
\begin{equation}
|g(r)| \leq C_\lambda \int_{K}{\int_{\R^n}{|f(x,k)| e^{|r| \|x\|}}dx~dk} \leq C_\lambda e^{A|r|}.\label{1stestimate}
\end{equation}
For $r\in \R$ the function $g$ satisfies the estimate 
\begin{equation}
|g(r)| = |\langle \widehat f(T_{r, \lambda})e^\lambda_{i_0}, e^\lambda_{j_0}\rangle_{H(K,\lambda)}| 
\leq \|\widehat f(T_{r, \lambda})\|_{HS} \leq C_\lambda e^{-\theta(r)}.\label{2ndestimate}
\end{equation}
Using the estimates (\ref{1stestimate}), (\ref{2ndestimate}) and applying Lemma \ref{lemma} to the function $g$ we get that $g=0$ and hence $f=0$ by the Plancherel theorem.

Conversely, let $\theta : [0,\infty) \ra [0,\infty)$ be a non-decreasing function such that $I < \infty$. By Theorem \ref{ccradon} (b), we get a radial function $g \in C_c^{\infty}(\R^n)$ such that 
\bes 
|\hat{g}(y)|\leq Ce^{-\theta(\|y\|)}, \:\:\:\: \txt{ for } y \in \R^n. 
\ees
We define
\begin{equation*}
f(x,k) = g(x),\:\:\:\:(x,k) \in M(n).
\end{equation*}
Thus $f$ is left and right invariant under the action of $SO(n)$. Since $(M(n),SO(n))$ is a Gelfand pair and $f$ is right $SO(n)$-invariant, it follows that for $r \in (0,\infty)$, $\widehat{f}(T_{r,\lambda})$ is zero unless $\lambda$ is the trivial representation of $SO(n-1)$. Moreover for the trivial representation $\lambda$ of $SO(n-1)$ it follows that $\langle \widehat{f}(T_{r,\lambda})\chi_n,\chi_m\rangle$ is zero unless $n=0$ where $\chi_k$'s are the spherical harmonics which form an orthonormal basis of $L^2(S^{n-1}) \cong H(K,\lambda)$. Further, since $f$ is left $SO(n)$-invariant, the only non-zero matrix entry of $\widehat{f}(T_{r,\lambda})$ is $\langle \widehat{f}(T_{r,\lambda})\chi_0,\chi_0\rangle = \hat{g}(r)$ where $\chi_0$ is the constant function $1$. It follows that $\|\widehat{f}(T_{r,\lambda})\|_{HS}^2 = |\hat{g}(r)|^2$ and so $f \in C_c(M(n))$ satisfies (\ref{mndecay}). 
\end{proof}
\begin{rem} Note that instead of the condition (\ref{mndecay}), if we assume that 
\begin{equation*}
|\langle \widehat f(T_{r, \lambda})e^\lambda_{i}, e^\lambda_{j}\rangle| \leq C_\lambda e^{-\theta(r)},
\end{equation*}
for every $i,j$ in the statement of Theorem \ref{mn} then the above proof goes through. This is in fact a weaker condition on the Fourier transform which gives a more general result.
\end{rem}

\section{Uniqueness of Solutions to The Schr\"odinger Equation}
It is now well known that the uniqueness of solutions to the initial value problem for the time-dependent Schr\"odinger Equation \begin{eqnarray}\label{schr}
\left\{\begin{array}{rcll} 
\displaystyle{\frac{\partial u}{\partial t}(x,t) ~ - ~ i\Delta u (x,t) } &=& ~ 0,
&\textmd{for } (x,t) \in \R^n \times [0, \infty), \\
u(x,0) &=& f(x), &\textmd{for } x \in \R^n,
\end{array}\right.
\end{eqnarray} 
is related to the uncertainty principles (see \cite{EKPV} and the references therein). Analogous results have also been obtained for certain non-commutative groups in \cite{Ch, PS, BTD, LM}. In this section we wish to deduce a similar uniqueness result in the context of $M(n)$. We first deduce one such result for $\R^n$.
\begin{thm}\label{uniqrn}
Let $u $ be a solution of the equation (\ref{schr}) and $\theta$ be a non-negative locally integrable function on $[0,\infty)$. Assume that $f\in C_c(\R^n)$ and for some positive $t_0$
\be \label{schrdecayrn} |u(x,t_0)| \leq Ce^{-\theta(\|x\|)}, \:\:\:\: \textmd{ for } x\in\R^n.\ee  If $\ds{\int_{0}^{\infty}{\frac{\theta(r)}{1+r^2}dr}=\infty,}$ then $u=0$. 
\end{thm}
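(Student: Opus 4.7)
The plan is to reduce Theorem \ref{uniqrn} to the several-variable Paley--Wiener statement Theorem \ref{ccradon}(a), using the explicit formula for the free Schr\"odinger propagator. Since $f \in C_c(\R^n) \subset L^1 \cap L^2(\R^n)$, the solution of (\ref{schr}) is $u(x,t) = (K_t \ast f)(x)$, where $K_t(x) = (4\pi i t)^{-n/2} e^{i\|x\|^2/(4t)}$. Expanding $\|x-y\|^2 = \|x\|^2 - 2 x \cdot y + \|y\|^2$ inside the kernel and pulling out the $x$-dependent Gaussian phase yields
\bes
u(x,t_0) \;=\; \frac{e^{i\|x\|^2/(4t_0)}}{(4\pi i t_0)^{n/2}}\, \widehat{g}\!\left(\frac{x}{2t_0}\right), \qquad g(y) := e^{i\|y\|^2/(4t_0)} f(y).
\ees
Since $f$ is compactly supported, so is $g$, and $g \in C_c(\R^n)$ has the same support as $f$. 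Taking absolute values and changing variable $y = x/(2t_0)$, the hypothesis (\ref{schrdecayrn}) rephrases as
\bes
|\widehat{g}(y)| \;\leq\; C' e^{-\theta(2 t_0 \|y\|)}, \qquad y \in \R^n.
\ees

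Next I verify that the radial function $\tilde\theta(y) := \theta(2 t_0 \|y\|)$ on $\R^n$ meets the hypotheses of Theorem \ref{ccradon}(a). Local integrability on $\R^n$ follows from the local integrability of $\theta$ on $[0,\infty)$: in polar coordinates $\int_{\|y\|\leq R} \tilde\theta(y)\,dy$ is a constant multiple of $\int_0^{2 t_0 R} \theta(s)\, s^{n-1}\,ds$, which is finite since $s^{n-1}$ is bounded on $[0, 2 t_0 R]$. For the divergence of the integral $I$ in Theorem \ref{ccradon}, polar coordinates followed by the substitution $s = 2 t_0 r$ give
\bes
\int_{\|y\|\geq 1} \frac{\tilde\theta(y)}{\|y\|^{n+1}}\,dy \;=\; c_n \int_1^\infty \frac{\theta(2 t_0 r)}{r^2}\,dr \;=\; 2 t_0 c_n \int_{2 t_0}^\infty \frac{\theta(s)}{s^2}\,ds.
\ees
The last integral diverges iff $\int_0^\infty \theta(s)/(1+s^2)\,ds$ diverges, because the two integrands are comparable on $[2 t_0, \infty)$ and the missing piece $\int_0^{2 t_0} \theta(s)/(1+s^2)\,ds$ is finite by local integrability. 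Thus the hypothesis of the theorem transfers to $\tilde\theta$.

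Applying Theorem \ref{ccradon}(a) to $g$ and $\tilde\theta$ forces $g \equiv 0$, hence $f \equiv 0$, and therefore $u \equiv 0$ on $\R^n \times [0,\infty)$. The whole argument is essentially one algebraic identity plus bookkeeping; the only subtlety I anticipate is the transfer of the integrability condition through the dilation, which is handled above. The fact that $f$ is merely continuous (not smooth) causes no trouble, since $f \in L^1 \cap L^2(\R^n)$ is enough to justify the propagator formula; alternatively, one may first verify the identity for $f \in C_c^\infty(\R^n)$ and extend by density.
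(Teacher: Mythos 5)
Your proposal is correct and follows essentially the same route as the paper: write $u(\cdot,t_0)$ via the explicit propagator as a unimodular Gaussian phase times $\widehat{g}(x/(2t_0))$ with $g(y)=e^{i\|y\|^2/(4t_0)}f(y)\in C_c(\R^n)$, and then invoke Theorem \ref{ccradon}(a). In fact you are slightly more careful than the paper, which applies Theorem \ref{ccradon} without explicitly checking that the dilated weight $\theta(2t_0\|\cdot\|)$ still satisfies the divergence hypothesis; your verification of that transfer is correct.
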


\begin{proof}
We recall that the fundamental solution of (\ref{schr}) is given by 
\bes
u(x,t)=(e^{it\Delta}f)(x)=\frac{e^{i\frac{\|x\|^2}{4t}}}{(4\pi it)^{\frac{n}{2}}} \displaystyle\int_{\R^n}{e^{-i\frac{x \cdot y}{2t}}e^{i\frac{\|y\|^2}{4t}}f(y)dy}
\ees
for any $x \in \R^n$ and $t \in \R$. We define 
\bes 
g(x)=e^{\frac{i\|x\|^2}{4t_0}}f(x), \:\:\:\: \txt{ for } x \in \R^n.
\ees 
Then $g \in C_c(\R^n)$ and 
\bes 
|u(x,t_0)|=\frac{1}{{(4\pi|t_0|)}^{\frac{n}{2}}}\left|\hat g\left({\frac{x}{2t_0}}\right)\right|, \:\:\:\: \txt{ for } x \in \R^n.
\ees 
From (\ref{schrdecayrn}) and Theorem \ref{ccradon} we get $g$ is zero. It follows that $f$ is zero and hence so is $u$.
\end{proof}

In order to study solutions of the Schr\"odinger Equation on Euclidean Motion Group $M(n)$ we need to know the Laplacian on $M(n)$ explicitly. Following Hulanicki (see \cite{H}) and others, for a Lie group $G$ with polynomial growth, if we consider the elements of the Lie algebra $\mathfrak g$ of $G$ as left invariant differential operators on $C^{\infty}(G)$, then an analogue of the Laplacian $\Delta_G$ on $G$ can be defined as 
\bes
\Delta_G = X_1^2 + \cdots + X_k^2,
\ees 
where $\{X_1, \cdots, X_k\}$ is a basis of $\mathfrak g$. We recall that the Casimir operator $\Delta_K$ on a compact, connected Lie group $K$ is defined by 
\bes 
\Delta_K = T_1^2 + \cdots + T_N^2,
\ees 
where $T_1, T_2, \cdots, T_N$ is an orthonormal basis of the Lie algebra $\mathfrak k$ of $K$ with respect to a fixed Ad-$K$-invariant inner product on $\mathfrak k$. It is known that $\Delta_K$ coincides with the Laplace-Beltrami operator for the bi-invariant metric on $K$ determined by the fixed inner product on $\mathfrak k$ (see P. 107, \cite{Ha} and \cite{Ta}). The following lemma gives an explicit expression of the Laplacian $\Delta_G$ on $M(n)$ in terms of the standard Laplacian $\Delta_{\R^n}$ on $\R^n$ and the Laplace-Beltrami operator $\Delta_{SO(n)}$ on $SO(n)$:

\begin{lem}
$\Delta_G = \Delta_{\R^n} + \Delta_{SO(n)}$, \: for $G = M(n).$
\end{lem}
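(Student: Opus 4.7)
The plan is to compute the left-invariant vector fields on $M(n)$ associated with a natural orthonormal basis of $\mathfrak{g} = \R^n \oplus \mathfrak{so}(n)$, split into the $\R^n$ and $\mathfrak{so}(n)$ pieces, and then add up the squares. I would take $\{e_1,\dots,e_n\}$, the standard basis of $\R^n$, together with $\{T_1,\dots,T_N\}$, the orthonormal basis of $\mathfrak{so}(n)$ already fixed in the paper; these together form an orthonormal basis of $\mathfrak{g}$. Under the exponential map, $\exp(te_i) = (te_i, I)$ and $\exp(tT_j) = (0, \exp(tT_j))$, and the semidirect-product law gives
\[
(x,k)\cdot(te_i, I) = (x + t\,k\cdot e_i,\, k), \qquad (x,k)\cdot(0,\exp(tT_j)) = (x,\, k\exp(tT_j)).
\]

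Differentiating at $t=0$ identifies the left-invariant vector fields as
\[
(X_i f)(x,k) = (k\cdot e_i)\cdot \nabla_x f(x,k), \qquad (T_j f)(x,k) = \frac{d}{dt}\bigg|_{t=0} f(x, k\exp(tT_j)).
\]
Since $k\cdot e_i$ does not depend on $x$, iterating yields
\[
(X_i^2 f)(x,k) = \sum_{p,q=1}^n (k\cdot e_i)_p (k\cdot e_i)_q \,\frac{\partial^2 f}{\partial x_p \partial x_q}(x,k).
\]
Summing over $i$ and using the identity $\sum_{i=1}^n (k\cdot e_i)_p(k\cdot e_i)_q = (kk^T)_{pq} = \delta_{pq}$, which holds precisely because $k \in SO(n)$, collapses this to $\sum_{i=1}^n X_i^2 = \Delta_{\R^n}$, acting only in the $x$-variable.

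The $T_j$ piece is cleaner: $T_j$ touches only the $K$-variable, and, applied to $k\mapsto f(x,k)$, coincides with the left-invariant vector field on $SO(n)$ associated with $T_j\in\mathfrak{so}(n)$. Summing the squares therefore recovers the Casimir, so $\sum_{j=1}^N T_j^2 = \Delta_{SO(n)}$ as operators on $C^\infty(M(n))$. Adding the two blocks yields $\Delta_G = \Delta_{\R^n} + \Delta_{SO(n)}$.

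The genuine subtlety — and the step I expect to require the most care — is the appearance of the rotation $k\cdot e_i$ in the flow of an individual $X_i$: no single $X_i$ equals $\partial/\partial x_i$, each being a $k$-dependent directional derivative on $\R^n$. It is only after summing over the full orthonormal basis of $\R^n$ that the orthogonality relation $kk^T=I$ cancels this $k$-twisting and restores the standard Euclidean Laplacian, as opposed to some $k$-dependent second-order operator. Verifying this cancellation carefully, and checking that the natural orthogonal decomposition $\mathfrak{g}=\R^n\oplus\mathfrak{so}(n)$ is the correct inner-product structure with respect to which Hulanicki's sum-of-squares prescription is being applied, is the main technical point of the proof.
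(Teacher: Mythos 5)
Your proposal is correct and follows essentially the same route as the paper: the paper likewise computes the left-invariant vector fields $X_{N+i}f(x,k)=\sum_j k_{ji}\,\partial f/\partial x_j$ and $T_jf(x,k)=\frac{d}{dt}\big|_{t=0}f(x,k\exp tT_j)$, and uses the orthogonality $\sum_i k_{ji}k_{li}=\delta_{jl}$ to collapse $\sum_i X_{N+i}^2$ to $\Delta_{\R^n}$. The only cosmetic difference is that the paper first embeds $M(n)$ into $GL(n+1,\R)$ to compute the exponential map, whereas you read it off directly from the semidirect-product law.
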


\begin{proof}
The group $M(n)$ may be identified with a matrix subgroup of $GL(n+1,\R)$, the group of invertible real matrices of order $n+1$, with the group operation in $M(n)$ corresponding to matrix multiplication via the map 
\bes (x, k) \mapsto \left (\begin{matrix} k & x \\ 0 & 1 \\ \end{matrix} \right)\ees 
for $x \in \R^{n}$ and $k \in SO(n)$. The Lie algebra $\mathfrak m(n)$ of $M(n)$ is given by the following Lie subalgebra of $\mathfrak{gl}(n+1,\R)$ 
\bes \left \{ X = \left ( \begin{matrix} T & E \\ 0 & 0 \\
\end{matrix} \right) : E \in \R^{n}, ~ T \in  \mathfrak{so}(n) \right \},\ees
where $\mathfrak{gl}(n+1,\R)$ is the Lie algebra of $GL(n+1,\R)$ consisting of real matrices of order $n+1$ with the standard Lie bracket $[A,B]=AB-BA$ for $A,B \in \mathfrak{gl}(n+1,\R)$ and $\mathfrak{so}(n) = \{ A \in \mathfrak{gl}(n,\R) : A+ A^t = 0\}$, the Lie algebra of $SO(n)$. It is easy to prove by induction that for any $m \in \mathbb N$,
\bes
 X^m = \left ( \begin{matrix} T^m & T^{m-1}E \\ 0 & 0 \\
\end{matrix} \right).
\ees
It follows that the image of $X$ under the matrix exponential map is given by \be \label{exp}  \exp X = I+X+\frac{X^2}{2!}+\frac{X^3}{3!}+\cdots = \left ( \begin{matrix} \exp T & (I+\frac{T}{2!}+\frac{T^2}{3!}+\cdots)E \\ 0 & 1 \\
\end{matrix} \right).\ee
Let $T_1, T_2, \cdots, T_N$ be an orthonormal basis of $\mathfrak{so}(n)$ (orthonormal with respect to the fixed Ad-$SO(n)$-invariant inner product on $\mathfrak{so}(n)$) and $E_1, E_2$, $\cdots, E_n$ be the standard basis of $\R^{n}$. Then a basis of $\mathfrak m (n)$ consists of the following elements 
\beas
X_i &=& \left ( \begin{matrix} T_i & 0 \\ 0 & 0 \\
\end{matrix} \right) \txt { for } 1 \leq i \leq N \\
X_{N+i }&=& \left ( \begin{matrix} 0 & E_i \\ 0 & 0 \\
\end{matrix} \right) \txt { for } 1 \leq i \leq n.
\eeas 
It follows from (\ref{exp}) that for $t \in \R$
\beas \exp tX_i &=& \left ( \begin{matrix} \exp{tT_i} & 0 \\ 0 & 1 \\ \end{matrix} \right) = (0, \exp tT_i) \in M(n) \txt{ for } 1 \leq i \leq N, \\ 
\exp tX_{N+i} &=& \left ( \begin{matrix} I & tE_i \\ 0 & 1 \\
\end{matrix} \right) = (tE_i,I) \in M(n) \txt{ for } 1 \leq i \leq n, \eeas where $I$ is the identity matrix of order $n$. It follows that the corresponding left invariant differential operators acting on $f \in C^{\infty}(M(n))$ are given by 
\beas
X_i f(x,k) &=& \left.\frac{d}{dt}\right|_{t=0} f((x,k)(0, \exp {t T_i}))\\ 
&=& \left.\frac{d}{dt}\right|_{t=0} f(x,k \exp{t T_i})\\ 
&=& T_i f (x,k)  \txt { for } 1 \leq i \leq N, \eeas
and \beas
X_{N+i} f(x,k)  &=& \left.\frac{d}{dt}\right|_{t=0} f((x,k)(tE_i,I))\\
&=& \left.\frac{d}{dt}\right|_{t=0} f(x+k\cdot tE_i ,k)\\
&=& \sum_{j=1}^n k_{ji} \frac{\partial f}{\partial x_j} (x,k)  \txt { for } 1 \leq i \leq n, 
\eeas
where $k_{ji}$ is the $ji$-th element of $k \in SO(n)$. It follows that 
\bes X_{N+i}^2 = \sum_{j,l=1}^n k_{ji} k_{li} \frac{\partial}{\partial x_j} \frac{\partial}{\partial x_l}. \ees
Summing over $i$ on both sides and using the orthogonality properties of the matrix entries of $k \in SO(n)$ we get that
\bes \sum_{i=1}^n X_{N+i}^2 = \sum_{j=1}^n \frac{\partial^2}{\partial x_j^2} = \Delta_{\R^n}.\ees  Hence \bes \Delta_G = \Delta_{\R^n} + \sum_{i=1}^N T_i^2 = \Delta_{\R^n} + \Delta_{SO(n)}.  \ees This completes the proof.
\end{proof}

The initial value problem for time-dependent Schr\"odinger equation on $M(n)$ is given by
\bea \label{schrG}
\left\{\begin{array}{rcll} 
\ds{\frac{\partial u}{\partial t}(g,t)} &=& i\Delta_G u(g,t),  \:\:\:\: &\txt{ for } g\in M(n), ~ t\in{\R}, \\
u(g,0) &=& f(g),  \:\:\:\: &\txt{ for } g \in M(n).
\end{array}\right.
\eea
It follows that the fundamental solution to (\ref{schrG}) is given by 
\be \label{fundsoln} u(g,t) = e^{it\Delta_G}f(g), \:\:\:\: \txt{ for } g \in M(n).\ee
For any $f \in L^2(M(n))$, expanding $f$ in the $SO(n)$-variable using the
Peter-Weyl theorem we obtain \be \label{pw} f(x,k) = \sum_{\pi \in
\widehat{SO(n)}} d_{\pi} \sum_{i,j = 1}^{d_\pi} f_{ij}^{\pi}(x)
\phi_{ij}^{\pi}(k),\ee where for each $\pi \in \widehat{SO(n)}$, $d_\pi$
is the degree of $\pi$, $\phi_{ij}^{\pi}$'s are the matrix
coefficients of $\pi$ and 
\be \label{f_ij} f_{ij}^{\pi}(x) = \int_{SO(n)} f(x,k) \overline{\phi_{ij}^{\pi}(k)} dk. \ee 
Here, the convergence is understood in the $L^2$-sense. Let $\lambda_{\pi}>0$ be such that $d\pi(\ds{\Delta_{SO(n)}}) = - \lambda_{\pi}I$ where $d\pi$ denotes the differential of the representation $\pi$ (see P. 107, \cite{Ha}). Then it is easy to see that for $k \in SO(n)$ 
\bes \Delta_{SO(n)} \phi_{ij}^{\pi}(k) = - \lambda_{\pi} \phi_{ij}^{\pi}(k). \ees 
This is because of the fact that for any $X \in \mathfrak m (n)$, 
\bes X \phi_{ij}^{\pi}(k) =  \left.\frac{d}{dt}\right|_{t=0} \langle \pi(k \exp tX) e_j, e_i \rangle = \langle \pi(k) d\pi(X)e_j, e_i \rangle. \ees 
It is known that the Laplacian $\Delta_G$ on $M(n)$ is self-adjoint and it acts on $f \in C_c^{\infty}(M(n))$ as
\beas
\Delta_G f (x,k) &=& \sum_{\pi \in \widehat{SO(n)}} d_{\pi} \sum_{i,j = 1}^{d_\pi} (\Delta_{\R^n} f_{ij}^{\pi})(x) \phi_{ij}^{\pi}(k) + f_{ij}^{\pi}(x) (\Delta_{SO(n)}\phi_{ij}^{\pi})(k)\\ 
&=& - \sum_{\pi \in \widehat{SO(n)}} d_{\pi} \sum_{i,j = 1}^{d_\pi} \left(\int_{\R^n} (\|\xi\|^2 +
\lambda_{\pi}) \hat{f_{ij}^{\pi}}(\xi) e^{i\xi \cdot y} d\xi \right) \phi_{ij}^{\pi}(k)
.\eeas
So we can use the spectral theorem to define the Schr\"odinger operator $e^{it\Delta_G}$ which is explicitly given by the spectral representation 
\be \label{e^itDelta} e^{it\Delta_G} f (x,k) = \sum_{\pi
\in \widehat{SO(n)}} d_{\pi} \sum_{i,j = 1}^{d_\pi}
\left(\int_{\R^n} e^{-it(\|\xi\|^2 +
\lambda_{\pi})} \hat{f_{ij}^{\pi}}(\xi) e
^{i\xi \cdot y} d\xi \right) \phi_{ij}^{\pi}(k). \ee We now present a uniqueness result for solutions to the  Schr\"odinger equation (\ref{schrG}).

\begin{thm}\label{uniqG}
Let $f \in C_c^{\infty}(M(n))$ and u be the solution to the system (\ref{schrG}) satisfying
\be \label{schrdecay} |u((x,k),t_0)| \leq Ce^{-\theta(\|x\|)}, \ee for some non-zero $t_0\in{\R}$ where $\theta$ is a non-negative locally integrable function on $[0,\infty)$. If $\int_{0}^{\infty}{\frac{\theta(r)}{1+r^2}dr}=\infty$, then $u=0$ on $M(n)\times \R$.
\end{thm}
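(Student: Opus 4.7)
The plan is to reduce the result to its Euclidean counterpart (Theorem \ref{uniqrn}) by decomposing everything via Peter-Weyl in the $SO(n)$-variable. The spectral formula (\ref{e^itDelta}) is already block-diagonal with respect to this decomposition, so each matrix coefficient of $u((x,k),t_0)$ will turn out to be a Euclidean Schr\"odinger evolution of $f_{ij}^{\pi}(x)$, up to an inconsequential unimodular factor $e^{-it_0\lambda_\pi}$.

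First I would verify that the Peter-Weyl coefficients $f_{ij}^\pi$ defined in (\ref{f_ij}) lie in $C_c(\R^n)$. This is immediate: since $f \in C_c^{\infty}(M(n))$, the support of $f_{ij}^\pi$ is contained in the projection of $\textmd{supp}(f)$ to $\R^n$, which is compact, and continuity follows from continuity of $f$ together with compactness of $SO(n)$. Second, starting from the spectral representation (\ref{e^itDelta}) and integrating against $\overline{\phi_{ij}^\pi(k)}$ over $SO(n)$, Peter-Weyl orthogonality picks out exactly one term and yields
\begin{equation*}
u_{ij}^\pi(x,t_0) := \int_{SO(n)} u((x,k),t_0)\,\overline{\phi_{ij}^{\pi}(k)}\,dk = e^{-it_0\lambda_\pi}\left(e^{it_0 \Delta_{\R^n}} f_{ij}^\pi\right)(x).
\end{equation*}
Here the interchange of sum and integral is harmless because $f \in C_c^{\infty}(M(n))$ makes the Peter-Weyl series of $u(\cdot,t_0)$ absolutely convergent in all relevant senses.

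Third, the hypothesis (\ref{schrdecay}) transfers to each block: since $\phi_{ij}^\pi$ is continuous on the compact group $SO(n)$ and hence bounded,
\begin{equation*}
\left| u_{ij}^\pi(x,t_0) \right| \leq \left(\sup_{k \in SO(n)} |\phi_{ij}^\pi(k)|\right) C e^{-\theta(\|x\|)} = C_{ij}^\pi e^{-\theta(\|x\|)}, \:\:\:\: x \in \R^n.
\end{equation*}
Combining with the previous display, the Euclidean Schr\"odinger solution $v(x,t) := (e^{it\Delta_{\R^n}} f_{ij}^\pi)(x)$ satisfies $|v(x,t_0)| \leq C_{ij}^\pi e^{-\theta(\|x\|)}$, and its initial datum $f_{ij}^\pi$ lies in $C_c(\R^n)$ by the first step.

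Fourth, I would invoke Theorem \ref{uniqrn} for each $(\pi,i,j)$: divergence of $\int_0^\infty \theta(r)/(1+r^2)\,dr$ forces $v \equiv 0$, so $f_{ij}^\pi = v(\cdot,0) = 0$. By the Peter-Weyl expansion (\ref{pw}) this gives $f = 0$ in $L^2(M(n))$, and since $f$ is continuous, $f \equiv 0$ pointwise; hence $u = e^{it\Delta_G} f \equiv 0$ on $M(n)\times\R$. No real obstacle should arise beyond bookkeeping: the one point that deserves care is the interchange of the $SO(n)$-integral with the Peter-Weyl sum in (\ref{e^itDelta}) when reading off $u_{ij}^\pi(x,t_0)$, which is why the hypothesis $f \in C_c^\infty(M(n))$ (rather than merely $C_c$) is convenient, guaranteeing rapid decay of $\hat{f_{ij}^\pi}$ in $\xi$ and of $\lambda_\pi$-weighted Fourier data in $\pi$.
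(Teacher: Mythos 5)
Your proposal is correct and follows essentially the same route as the paper: project $u(\cdot,t_0)$ onto the Peter--Weyl matrix coefficients, use the spectral formula (\ref{e^itDelta}) to identify $(e^{it_0\Delta_G}f)_{ij}^{\pi}$ with $e^{-it_0\lambda_\pi}(e^{it_0\Delta_{\R^n}}f_{ij}^{\pi})$, transfer the decay estimate to each block, and invoke Theorem \ref{uniqrn}. The only cosmetic difference is that the paper bounds the projected coefficient by $\int_{SO(n)}|u((x,k),t_0)|\,dk$ directly rather than via a sup bound on $\phi_{ij}^{\pi}$.
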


\begin{proof}
Since $e^{it\Delta_G}$ is a unitary operator on $L^2(M(n))$, it follows that $(e^{it_0\Delta_G}f)_{ij}^{\pi}(x)$ is well defined for almost every $x \in \R^n$. From (\ref{fundsoln}), (\ref{f_ij}) and (\ref{schrdecay}),  we get that for almost every $x \in \R^n$, 
\be \label{solndecay} |(e^{it_0\Delta_G}f)_{ij}^{\pi}(x)| \leq \int_{SO(n)} |(e^{it_0\Delta_G}f)(x,k)| dk \leq C e^{-\theta(\|x\|)}.\ee
On the other hand, (\ref{e^itDelta}) can be written as 
\be \label{e^itDeltaG} e^{it\Delta_G} f (x,k) = \sum_{\pi
\in \widehat{SO(n)}} d_{\pi} \sum_{i,j = 1}^{d_\pi}  e^{-it\lambda_{\pi}} (e^{it\Delta_{\R^n}}f_{ij}^{\pi})(x) \phi_{ij}^{\pi}(k).\ee
It follows from (\ref{pw}) and (\ref{e^itDeltaG}) that 
\bes
(e^{it\Delta_G}f)_{ij}^{\pi}(x) = (e^{it\Delta_{\R^n}}f_{ij}^{\pi})(x) e^{-it\lambda_{\pi}},
\ees 
so from (\ref{solndecay}) we get 
\bes |(e^{it_0\Delta_{\R^n}}f_{ij}^{\pi})(x)| \leq  C e^{-\theta(\|x\|)}.\ees
Since $f_{ij}^{\pi} \in C_c(\R^n)$, we can apply Theorem \ref{uniqrn} to $f_{ij}^{\pi}$ to get that $f_{ij}^{\pi}$ is zero for each $i,j$ and $\pi$. Hence $u$ is zero.

\end{proof}

\textbf{Acknowledgement.} We would like to thank Swagato K. Ray for suggesting this problem and for the many useful discussions during the course of this work.

\end{document}